\documentclass[11pt]{article}

\usepackage[margin=1in]{geometry}
\usepackage{amsmath,amssymb,amsthm}
\usepackage{enumitem}
\usepackage{xcolor}
\usepackage{hyperref}

\newcommand{\Q}{\mathbb{Q}}
\newcommand{\R}{\mathbb{R}}

\newtheorem{theorem}{Theorem}[section]
\newtheorem{lemma}[theorem]{Lemma}
\newtheorem{corollary}[theorem]{Corollary}

\title{A Proof of a Conjecture of M\'oricz and Nagy on Rational-Value Sums}

\author{Jing Huang\footnote{E-mail address: {\it
jhuangmath@gzhu.edu.cn (J. Huang).
}}
}

\date{\small
School of Mathematics and Information Science,
Guangzhou University, Guangzhou 510006, China
}

\begin{document}
\maketitle

\begin{abstract}
M\'oricz and Nagy introduced the problem of maximizing 
the number of $r$-element subsets with  rational 
sums in an $n$-element set of irrational numbers, 
and showed that it is equivalent to an extremal 
zero-sum problem. They determined the exact maximum 
in several cases. For the remaining range, 
they presented an explicit construction of an $n$-element
set of irrational numbers containing exactly
$m\binom{n-m}{r-1}$ such subsets,
where $m=\lfloor n/r\rfloor$.
They conjectured that this construction 
is always optimal for any 
$1<r<n$.
In this paper, we confirm that conjecture. 
Our proof combines an order-theoretic 
antichain argument for zero-sum subsets 
with a sharp maximization of the resulting 
binomial expressions. As a consequence, 
we determine exactly the maximum number 
of $r$-term zero-sum subsequences in sequences 
of $n$ nonzero integers.

\medskip
\textbf{2020 MSC:} 11B75, 05D05.

\textbf{Keywords:} Rational-value sum; zero-sum subsequence; 
extremal set theory.
\end{abstract}

\section{Introduction}  
For an $n$-element set $A\subseteq \R\setminus\Q$ 
and an integer $r$ with $1<r<n$, let
\[
\mathcal{H}(A,r)=\bigl\{B\subseteq A: |B|=r \text{ and } \sum_{a\in B} a\in \Q\bigr\},
\]
and define
\[
h(n,r)=\max\bigl\{|\mathcal{H}(A,r)|: A\subseteq \R\setminus\Q,\ |A|=n\bigr\}.
\]
The problem of determining $h(n,r)$ was 
introduced by M\'oricz and Nagy \cite{MN}. 
In its original formulation, this is an extremal additive 
problem: one seeks the maximum number of $r$-subsets that satisfy a prescribed arithmetic condition. 
%In this vein, it aligns with the
%Manickam-Mikl\'{o}s-Singhi line of research and related extremal questions %concerning subset sums  \cite{AHS,MMS1,MMS2,Pokrovskiy}.

However, the decisive point for the present problem 
is that this irrational-number formulation is only 
the route by which the question is posed, not the 
language in which it is ultimately solved. 
One of the key observations of
M\'oricz and Nagy \cite{MN} is that,
after applying a suitable $\Q$-linear map
whose kernel is exactly $\Q$, the condition
that a subset sum be rational is
transformed into the condition that a
corresponding sum be zero.
Accordingly, the present problem is most naturally
studied as an extremal zero-sum problem.
This point of view is sufficient for the proof of our main result
(Theorem~\ref{theorem} below), and it will also yield, as a consequence,
the same extremal value for sequences of nonzero rational
numbers and hence, after clearing denominators,
for sequences of nonzero integers.
Accordingly,
although the problem was proposed by analogy with
extremal set-sum questions, the mathematics that
governs it is zero-sum theory.

Much of zero-sum theory concerns direct and inverse problems
\cite{Caro,GaoGeroldinger,Geroldinger2009}.
On the direct side, one seeks length thresholds 
that force the existence of a zero-sum subsequence; 
the Erd\H{o}s--Ginzburg--Ziv theorem \cite{EGZ} 
and the Davenport constant are the standard prototypes. 
On the inverse side, one studies the structure of extremal sequences which contain no such zero-sum subsequences;
the structural characterization of extremal EGZ 
sequences \cite{Bialostocki1992} and the Savchev--Chen theorem
\cite{SavchevChen2007} 
are the prime examples here.

From this perspective, the problem considered here is atypical. 
It neither asks for a threshold that guarantees a zero-sum 
subsequence nor for the structure of zero-sum free sequences. 
Instead, it reverses the usual lower-bound viewpoint 
and asks for an extremal upper bound: 
how many fixed-length zero-sum subsequences 
can be contained in a sequence of nonzero terms? 
In this sense, it serves as a natural complement to the mainstream zero-sum literature. A particularly close counting precedent 
is a conjecture of Bialostocki \cite{B}---motivated by the 
Erd\H{o}s--Ginzburg--Ziv theorem---concerning the minimum number of zero-sum subsequences of length $m$ (modulo $m$) in an integer sequence; this line of research was extensively developed and resolved in various cases
by Bialostocki and collaborators, 
Kisin, F\"uredi and Kleitman, and Grynkiewicz \cite{Bialostocki2003,FK,Grynkiewicz,Kisin}. 
Our problem points in the opposite direction: 
it seeks the maximum rather than the minimum.

M\'oricz and Nagy \cite{MN} determined 
$h(n,2)$, $h(n,3)$, and $h(n,r)$ for $r\ge n/2$. 
For the remaining uniform range, 
they proposed a two-coset construction. Write
$
m=\left\lfloor \frac{n}{r}\right\rfloor,
$
fix an irrational number $\alpha$, and let $A$ consist of $m$ distinct elements from $\Q-(r-1)\alpha$ and $n-m$ distinct elements from $\Q+\alpha$. Then an $r$-subset of $A$ has rational sum if and only if it contains exactly one element from $\Q-(r-1)\alpha$ and $r-1$ elements from $\Q+\alpha$, so this construction yields
\[
m\binom{n-m}{r-1}
\]
rational-sum $r$-subsets \cite[Construction~5.1 and Proposition~5.2]{MN}. In \cite[Conjecture~5.3]{MN}, they asked whether this lower bound is always best possible.

The purpose of this paper is to prove that conjecture. 
Equivalently, we determine the maximum number of $r$-term zero-sum subsequences in length-$n$ sequences of nonzero rational numbers, and hence also in length-$n$ sequences of nonzero integers. Thus the result supplies an exact extremal companion to the existence and lower-bound problems that dominate zero-sum theory. To establish the exact maximum, our proof builds upon the rational-to-zero-sum reduction introduced in \cite{MN}. The core novelty lies in capturing the inherent interplay between positive and negative terms within a zero-sum subset. By casting this natural constraint as a partial order on a product poset, we deploy an antichain argument, followed by a sharp optimization of the resulting binomial expressions to yield the desired tight upper bound.
More precisely, we have the following results. 
\begin{theorem}
\label{theorem}
Let $n>1$ be a positive integer, $1<r<n$, and put
$
m=\left\lfloor \frac{n}{r}\right\rfloor.
$
Then every $n$-element set $A\subseteq \R\setminus\Q$ satisfies
\[
|\mathcal{H}(A,r)|\le m\binom{n-m}{r-1}.
\]
We further have 
\[
h(n,r) = m\binom{n-m}{r-1}.
\]
\end{theorem}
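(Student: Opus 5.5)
We first pass to the zero-sum formulation of \cite{MN}. Choose a $\Q$-linear map $\varphi:\R\to\R$ with $\ker\varphi=\Q$, which exists because $\R$ is a $\Q$-vector space. Then $B\subseteq A$ has rational sum if and only if $\sum_{a\in B}\varphi(a)=0$, and $\varphi(a)\neq 0$ for every $a\in A$. Since only finitely many values occur, we may compose with a further $\Q$-linear functional on the finite-dimensional $\Q$-span of $\varphi(A)$. This functional should keep all the values $\varphi(a)$ nonzero, which a generic choice does. After this we may assume that we have a sequence $x_1,\dots,x_n$ of nonzero rationals, and that we must bound the number $Z$ of $r$-subsets $I$ with $\sum_{i\in I}x_i=0$. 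Let $P=\{i:x_i>0\}$ and $N=\{i:x_i<0\}$, with $|P|=p$ and $|N|=q=n-p$. Replacing $x$ by $-x$ if necessary, we may assume $q\le p$. Each zero-sum $r$-set $I$ meets both $P$ and $N$, say $|I\cap N|=j$ with $1\le j\le r-1$.

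For the antichain step, fix $j$ and sort $N$ by absolute value and $P$ by value. Identify an $r$-set of type $j$ with the pair $(S,T)$, where $S\subseteq N$, $|S|=j$, and $T\subseteq P$, $|T|=r-j$. Order the $j$-subsets of $N$ by the componentwise (Gale) order on sorted absolute values, and do the same for the $(r-j)$-subsets of $P$. In the product poset ordered by $(S,T)\le(S',T')$ iff $S\le S'$ and $T\ge T'$, zero-sum pairs form an antichain. Indeed, if $(S,T)<(S',T')$ strictly, then $\sum_{S}|x|\le\sum_{S'}|x|$ and $\sum_T x\ge\sum_{T'}x$, with at least one of these strict because the values are distinct. (If values are not distinct we reduce to the distinct case by perturbation; to be careful, we work with a suitable symmetric-chain decomposition rather than strict inequality.) Both factors are products of chains in the shifted sense; more precisely, the Gale order on $\binom{[q]}{j}$ is the lattice $L(j,q-j)$, which is Sperner with a symmetric chain decomposition. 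The bound we aim for is then
\[
Z_j\le \min\Bigl(\binom{q}{j}\cdot\binom{p-1}{r-j-1},\ \binom{p}{r-j}\binom{q-1}{j-1}\Bigr),
\]
or at least the weaker $Z_j\le \binom{q}{j}\binom{p-1}{r-j-1}$. The latter comes from a simpler argument: for each fixed $S$, the partner $T$ has fixed sum, and the $(r-j)$-sets of distinct positive reals with a fixed sum form an antichain in $L(r-j,p-r+j)$. The largest such antichain has size at most the largest rank, which is at most $\binom{p-1}{r-j-1}$-like. I expect the cleanest usable form to be the bound $Z\le \sum_j\min\bigl(\binom{q}{j}M_{r-j}(p),\binom{p}{r-j}M_j(q)\bigr)$, where $M_k(p)$ is the width of $L(k,p-k)$.

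Since bounding widths of $L(k,\ell)$ sharply is delicate, I would first try a more direct counting bound instead. For each zero-sum $I$, let $y$ be its element of largest absolute value. Then $y$ is determined once the rest of $I$ is fixed, since $y=-\sum(I\setminus\{y\})$. Two cases arise. If $y\in N$, then $I\setminus\{y\}$ contains at least $r-j$ positives, and we charge $I$ to the $(r-1)$-set $I\setminus\{y\}$. This gives $Z\le\binom{n-1}{r-1}$-type bounds, but that is too weak. The refined form is to combine it with the antichain restriction across $j$ to get
\[
Z\le \max_{1\le q\le n/2}\ \sum_{j\ge1}\binom{q}{j}\,w_{r-j}(p),
\]
and then to prove the binomial inequality that this is maximized at $j=1$ terms with $q=m$. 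In that extremal configuration the value is $m\binom{n-m}{r-1}$, since for $j=1$ the width of $L(r-1,p-r+1)$ is replaced by the full $\binom{p}{r-1}$ while only $q=m\le n/r$ negatives are used.

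The main obstacle is the final optimization. The bound must be sharp exactly at $q=m=\lfloor n/r\rfloor$, so the crude estimates are useless. We need a function $f(q)$ bounding $Z$ that is increasing for $q\le n/r$ and decreasing after, with the correct value at $q=m$. Here is what I would try first. Use the counting bound $Z\le q\binom{p}{r-1}$ when $q$ is small: each zero-sum set is charged to its negative element of largest absolute value together with $r-1$ positives, so that the remaining negatives are determined. Use an antichain/LYM bound of the form $Z\le \frac{p}{r-1}\binom{q}{\cdot}$ for $q$ large. Then compare ratios of consecutive binomials, $\frac{(q+1)\binom{n-q-1}{r-1}}{q\binom{n-q}{r-1}}=\frac{(q+1)(n-q-r+1)}{q(n-q)}$, which is at least $1$ exactly when $q\le (n-r+1)/r$, roughly $q\le m$. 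This monotonicity check, together with making the small-$q$ bound genuinely valid when $j\ge2$ contributes, is where I expect most of the difficulty. Equality is provided by Construction~5.1 of \cite{MN}, which gives $h(n,r)=m\binom{n-m}{r-1}$.
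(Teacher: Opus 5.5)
Your reduction to nonzero (even rational) values and your appeal to Construction~5.1 for the lower bound are fine. The core upper bound, however, is never established: the proposal is a sequence of candidate inequalities, and several of them are false.

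The ``weaker'' bound $Z_j\le\binom{q}{j}\binom{p-1}{r-j-1}$ fails on the extremal sequence itself. For $m$ copies of $-(r-1)$ and $n-m$ copies of $1$ you get $Z_1=m\binom{n-m}{r-1}>m\binom{n-m-1}{r-2}$. Your Gale-order antichain argument needs distinct values, and you cannot perturb to obtain them, because perturbation destroys zero sums. Repeated values are unavoidable here: elements of $A$ in the same coset of $\Q$ have the same image under $\varphi$, and the extremal configuration consists of exactly such repetitions.

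Likewise, the charging bound $Z\le q\binom{p}{r-1}$ breaks down once zero-sum sets with $j\ge2$ negatives occur. With $r=4$, four entries $-1$ and four entries $+1$ give $Z=\binom{4}{2}^2=36>16=4\binom{4}{3}$, even though $q\le p$. The ``large $q$'' bound is not actually formulated. More fundamentally, any scheme that bounds each $Z_j$ separately and then adds cannot be sharp; you must exploit the interaction between zero-sum sets with different numbers of negative elements. Finally, your ratio computation treats only the $t=1$ family $q\binom{n-q}{r-1}$. The real work in the optimization is showing that no $t\ge2$ does better.

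The missing idea is the right poset. Take all pairs $(S,T)$ with $S\subseteq P$, $T\subseteq N$, $|S|+|T|=r$ (all $j$ at once), ordered by $S\subseteq S'$ and $T\supseteq T'$. A strict step up adds at least one positive element and removes at least one negative one, so it strictly increases the sum. Hence zero-sum sets form an antichain, with no distinctness assumption.

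Now take uniformly random orderings of $P$ and $N$. They give a random chain that contains a given $(S,T)$ with probability $1/\bigl(\binom{p}{|S|}\binom{q}{|T|}\bigr)$. This yields the LYM-type inequality $\sum_t z_t/\bigl(\binom{q}{t}\binom{p}{r-t}\bigr)\le 1$, and therefore $Z\le\max_{1\le t\le r-1}\binom{q}{t}\binom{p}{r-t}$.

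It then remains to show $\binom{q}{t}\binom{n-q}{r-t}\le m\binom{n-m}{r-1}$ for all $q$ and all $t$. The paper does this in two steps. First, for each fixed $t$ it locates the maximizer $q_t=\lfloor t(n+1)/r\rfloor$. Second, it bounds the value there using the Vandermonde expansion $\binom{tm+a}{t}=\sum_k\binom{m}{k}\binom{(t-1)m+a}{t-k}$ together with termwise comparisons.
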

The following   corollary, which completely solves Problem
1.2 of \cite{MN}, will be deduced after the proof of Theorem \ref{theorem}.

\begin{corollary}
\label{cor:problem12}
Let $n>1$ be a positive integer and $1<r<n$. For a sequence
$\mathbf{x}=(x_1,\dots,x_n)$ of nonzero rational numbers, let
\[
Z_{\mathbf{x}}(r)=\bigl\{I\subseteq\{1,\ldots,n\}: |I|=r \text{ and } \sum_{i\in I}x_i=0\bigr\}.
\]
Define
\[
z_{\mathbb{Q}}(n,r)=\max\bigl\{|Z_{\mathbf{x}}(r)|: \mathbf{x}\in (\mathbb{Q}\setminus\{0\})^n\bigr\}
\]
and
\[
z_{\mathbb{Z}}(n,r)=\max\bigl\{|Z_{\mathbf{x}}(r)|: \mathbf{x}\in (\mathbb{Z}\setminus\{0\})^n\bigr\}.
\]
If $m=\lfloor n/r\rfloor$, then
\[
z_{\mathbb{Q}}(n,r)=z_{\mathbb{Z}}(n,r)=m\binom{n-m}{r-1}.
\]
Furthermore, equality is attained by the integer sequence
\[
(\underbrace{-(r-1),\dots,-(r-1)}_{m\text{ times}},
\underbrace{1,\dots,1}_{n-m\text{ times}}).
\]
\end{corollary}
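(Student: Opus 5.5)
The plan is to deduce the corollary directly from Theorem~\ref{theorem}. I will reverse the rational-to-zero-sum reduction of M\'oricz and Nagy: embed any sequence of nonzero rationals into a set of $n$ distinct irrational numbers so that zero-sum index sets correspond exactly to rational-sum subsets. The integer case then follows by clearing denominators, and the lower bound by a direct count on the displayed sequence.

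\emph{Upper bound for $z_{\Q}$.} Let $\mathbf{x}=(x_1,\dots,x_n)\in(\Q\setminus\{0\})^n$. Fix an irrational $\alpha$ (say $\alpha=\sqrt2$) and pairwise distinct rationals $q_1,\dots,q_n$. Put $a_i=x_i\alpha+q_i$.
\begin{enumerate}[label=(\roman*)]
\item Each $a_i$ is irrational, since $x_i\neq0$.
\item The $a_i$ are pairwise distinct. Indeed, $a_i=a_j$ gives $(x_i-x_j)\alpha=q_j-q_i$, and irrationality of $\alpha$ forces $x_i=x_j$ and then $q_i=q_j$, so $i=j$.
\item For $|I|=r$ we have $\sum_{i\in I}a_i=\alpha\sum_{i\in I}x_i+\sum_{i\in I}q_i$. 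This is rational if and only if $\sum_{i\in I}x_i=0$, again because $\alpha\notin\Q$.
\end{enumerate}
Hence $I\mapsto\{a_i:i\in I\}$ is a bijection from $Z_{\mathbf{x}}(r)$ onto $\mathcal{H}(A,r)$, where $A=\{a_1,\dots,a_n\}$ is an $n$-element subset of $\R\setminus\Q$. Theorem~\ref{theorem} then gives $|Z_{\mathbf{x}}(r)|\le m\binom{n-m}{r-1}$, so $z_{\Q}(n,r)\le m\binom{n-m}{r-1}$.

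\emph{Integers versus rationals.} Trivially $z_{\mathbb{Z}}(n,r)\le z_{\Q}(n,r)$, because every integer sequence is a rational sequence. Conversely, multiplying a rational sequence by a common denominator $D>0$ gives an integer sequence. The entries stay nonzero and the zero-sum index sets are unchanged, so the two maxima coincide. (Only the upper bound is actually needed here.)

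\emph{Attainment.} For the displayed sequence, let an $r$-subset contain $k$ entries equal to $-(r-1)$ and $r-k$ entries equal to $1$. Its sum is $-k(r-1)+(r-k)=r(1-k)$, which vanishes exactly when $k=1$. The number of such index sets is $m\binom{n-m}{r-1}$, so all three quantities equal $m\binom{n-m}{r-1}$. There is no real obstacle in this argument. The only point needing care is step (ii), since $\mathcal{H}(A,r)$ is defined for sets rather than sequences; the distinct shifts $q_i$ handle repeated values among the $x_i$.
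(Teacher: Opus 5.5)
Your proof is correct, but it takes a different route from the paper. The paper never passes back through irrational numbers. Lemma~\ref{antichain} is stated for arbitrary nonzero reals and is indexed by positions, so repeated entries cause no trouble. The paper therefore applies Lemmas~\ref{antichain} and~\ref{ine} directly to $\mathbf{x}$ to get $|Z_{\mathbf{x}}(r)|\le m\binom{n-m}{r-1}$, then uses $z_{\mathbb{Z}}\le z_{\mathbb{Q}}$ and the explicit integer sequence for the matching lower bound.

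You instead treat Theorem~\ref{theorem} as a black box and embed the sequence problem into the set problem via $a_i=x_i\alpha+q_i$. This is the converse of the reduction carried out with Lemma~\ref{linear} in the proof of Theorem~\ref{theorem}. Your steps (i)--(iii) are all sound, and the distinct shifts $q_i$ correctly handle the fact that $\mathcal{H}(A,r)$ is defined for sets while $\mathbf{x}$ may have repeated entries.

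Your route yields the inequality $z_{\mathbb{Q}}(n,r)\le h(n,r)$ without reference to how the theorem is proved. Together with Lemma~\ref{linear}, it makes explicit that the set and sequence formulations are equivalent. The paper's route is shorter and more general: it needs no distinctness bookkeeping, and the same two lemmas give the bound for sequences of arbitrary nonzero real numbers. Your step (iii), by contrast, uses that $\sum_{i\in I}x_i$ is rational.

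Your remark about clearing denominators is correct but, as you note yourself, not needed.
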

The remainder of the paper is mainly
devoted to the proof of Theorem~\ref{theorem}.

\section{Proof of Theorem \ref{theorem} and Corollary
\ref{cor:problem12}}
To establish Theorem \ref{theorem}, our proof is
divided into three intermediate lemmas.
Before proceeding to the details,
let us fix some standard notation and conventions
used throughout this section.
For any positive integer $n$,
we denote the set $\{1, \dots, n\}$ by $[n]$.
Furthermore, we adopt the extended definition
for binomial coefficients, setting
$\binom{n}{k} = 0$ whenever $k<0$ or $k>n$.

The following lemma is essentially \cite[Lemma~2.1]{MN};
we include a proof for completeness.

\begin{lemma}
\label{linear}
Let $V\subseteq \R$ be a finite-dimensional
vector space over $\Q$ such that $\Q\subseteq V$.
Then there exists a $\Q$-linear
map $\varphi:V\to \R$ with $\ker \varphi=\Q$.
\end{lemma}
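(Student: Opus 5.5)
Since $V$ is a finite-dimensional $\Q$-vector space and $\Q\subseteq V$ is a one-dimensional subspace spanned by $1$, the plan is to complete $\{1\}$ to a $\Q$-basis of $V$ and send $1$ to zero while keeping the remaining basis vectors independent in the image. Concretely, let $d=\dim_{\Q}V$ and extend $\{1\}$ to a basis $\{1,v_2,\dots,v_d\}$ of $V$. This uses only finite-dimensional linear algebra: any linearly independent set extends to a basis.

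Next we choose target values in $\R$ that are $\Q$-linearly independent. The simplest choice is to map each $v_i$ to itself. Define $\varphi$ on the basis by $\varphi(1)=0$ and $\varphi(v_i)=v_i$ for $2\le i\le d$, and extend $\Q$-linearly; this is well-defined because a linear map may be prescribed freely on a basis. Equivalently, $\varphi$ is the projection of $V=\Q\oplus W$ onto $W=\operatorname{span}_{\Q}\{v_2,\dots,v_d\}$ along $\Q$, followed by the inclusion $W\subseteq\R$.

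The remaining step is to verify $\ker\varphi=\Q$. Clearly $\varphi(q)=q\varphi(1)=0$ for $q\in\Q$. Conversely, write $x=q_1+\sum_{i\ge2}q_iv_i$ with $q_i\in\Q$. Then $\varphi(x)=\sum_{i\ge 2}q_iv_i$, and this vanishes only if all $q_i=0$ for $i\ge2$, by linear independence of $\{v_2,\dots,v_d\}$ (a subset of a basis). Hence $x=q_1\in\Q$.

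There is no genuine obstacle here; the only point needing care is that the images of $v_2,\dots,v_d$ remain $\Q$-linearly independent in $\R$, which holds trivially when each $v_i$ is mapped to itself. The case $d=1$, i.e.\ $V=\Q$, is degenerate: there $\varphi\equiv0$ works.
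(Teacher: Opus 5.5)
Your proof is correct and is essentially the paper's argument: both extend $\{1\}$ to a $\Q$-basis of $V$, set $\varphi(1)=0$, and send the other basis vectors to $\Q$-linearly independent reals. The only difference is that you fix the images as $\varphi(v_i)=v_i$, while the paper takes arbitrary $\Q$-linearly independent $\gamma_i$; your choice has the small advantage that the existence of such images is immediate.
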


\begin{proof}
Choose a basis $1,\beta_1,\dots,\beta_d$ of $V$ over $\Q$. Choose real numbers $\gamma_1,\dots,\gamma_d$ that are linearly independent over $\Q$. Define $\varphi$ on the basis by
\[
\varphi(1)=0,\qquad \varphi(\beta_i)=\gamma_i\quad (1\le i\le d),
\]
and extend $\Q$-linearly to all of $V$. Every rational number lies in the kernel because $\varphi(1)=0$. Conversely, if
\[
q+\sum_{i=1}^d c_i\beta_i\in \ker\varphi
\qquad (q,c_1,\dots,c_d\in \Q),
\]
then
$
\sum_{i=1}^d c_i\gamma_i=0.
$
By the $\Q$-linear independence of $\gamma_1,\dots,\gamma_d$, all coefficients $c_i$ are zero. Hence the kernel is exactly $\Q$.
\end{proof}

By virtue of the core homomorphism
established in Lemma \ref{linear},
our original problem is essentially reduced to
bounding the number of zero-sum $r$-subsets.
The proof of our next fundamental bound relies on
order-theoretic techniques.
Before stating the lemma, we briefly recall some
standard terminology.
A partially ordered set, or \emph{poset},
is a pair $(X, \preceq)$ consisting of a set $X$
and a binary relation $\preceq$ that is reflexive,
antisymmetric, and transitive.
Two elements $x, y \in X$ are said to be \emph{comparable} if
either $x \preceq y$ or $y \preceq x$.
A subset $C \subseteq X$ is called a \emph{chain} if
every pair of elements in $C$ is comparable.
Conversely, a subset $\mathcal{A} \subseteq X$ is
an \emph{antichain} if no two distinct elements
in $\mathcal{A}$ are comparable.
With these notions in place, we can formulate our second lemma.
\begin{lemma}
\label{antichain}
Let $x_1,\dots,x_n$ be nonzero real numbers.
For $1<r<n$, let
\[
\mathcal{Z}_r(x_1,\dots,x_n)
=\bigl\{I\subseteq [n]: |I|=r \text{ and }
\sum_{i\in I}x_i=0\bigr\}.
\]
Let
\[
P=\{i\in [n]: x_i>0\},\qquad N=\{i\in [n]: x_i<0\},
\]
and put $p=|P|$, $q=|N|$. Then
\[
|\mathcal{Z}_r(x_1,\dots,x_n)|\le
\max_{1\le t\le r-1}\binom{q}{t}\binom{p}{r-t}.
\]
\end{lemma}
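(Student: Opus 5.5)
We sort the zero-sum $r$-sets by how many negative indices they contain, and then convert the zero-sum condition into an equal-weight condition. This makes the whole family an antichain in a product of two Boolean lattices. A LYM-type argument with random pairs of maximal chains then gives the bound.

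For $I\in\mathcal{Z}_r(x_1,\dots,x_n)$ put $t=|I\cap N|$. Since $\sum_{i\in I}x_i=0$ and all $x_i\neq 0$, $I$ meets both $P$ and $N$. Hence $1\le t\le r-1$, and $|I\cap P|=r-t$. Let $\mathcal{F}_t$ be the zero-sum sets with exactly $t$ negative indices.

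To each such $I$ associate the pair
\[
\Phi(I)=\bigl(I\cap P,\; N\setminus (I\cap N)\bigr)\in 2^P\times 2^N .
\]
This map is injective. Put positive weights $w_i=|x_i|$. The zero-sum condition reads $\sum_{i\in I\cap P}w_i=\sum_{i\in I\cap N}w_i$. It is therefore equivalent to
\[
\sum_{i\in I\cap P}w_i+\sum_{i\in N\setminus I}w_i=\sum_{i\in N}w_i=:W ,
\]
so every $\Phi(I)$ has the same total weight $W$. Order $2^P\times 2^N$ by componentwise inclusion. If $\Phi(I)\preceq\Phi(J)$ with $\Phi(I)\neq\Phi(J)$, then the larger pair contains at least one extra element of positive weight, so its weight is strictly bigger than $W$, which is impossible. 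Hence $\Phi(\mathcal{Z}_r)$ is an antichain. Moreover, $\Phi(\mathcal{F}_t)$ lies in the "level" $L_t$ of pairs with ranks $(r-t,\,q-t)$.

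The key observation is that the levels $L_1,\dots,L_{r-1}$ have rank pairs $(r-t,q-t)$ in which both coordinates strictly decrease as $t$ increases. Now pick a maximal chain $C_1$ of $2^P$ and a maximal chain $C_2$ of $2^N$ independently and uniformly at random. The set $C_1\times C_2$ meets each $L_t$ in exactly one element $(A_t,B_t)$ (when the level is nonempty). For $s<t$ we have $A_t\subsetneq A_s$ and $B_t\subsetneq B_s$, since both lie on the fixed chains with smaller ranks. So the chosen points in the different levels are pairwise comparable, and an antichain contains at most one of them.

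A fixed pair in $L_t$ is the chosen point of $L_t$ with probability $1/\bigl(\binom{p}{r-t}\binom{q}{q-t}\bigr)$. Taking expectations of the number of antichain elements hit therefore gives the LYM-type inequality
\[
\sum_{t=1}^{r-1}\frac{|\mathcal{F}_t|}{\binom{p}{r-t}\binom{q}{t}}\le 1 .
\]
Levels with a zero binomial have $\mathcal{F}_t=\emptyset$ and are simply omitted. Consequently
\[
|\mathcal{Z}_r|=\sum_t|\mathcal{F}_t|\le\max_{1\le t\le r-1}\binom{q}{t}\binom{p}{r-t},
\]
because a sum of $|\mathcal{F}_t|$ whose normalized sum is at most $1$ is at most the largest normalizer.

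The main point needing care is the comparability claim: the rank pairs of the relevant levels must form a chain in the rank-pair order. This is exactly why we complement the negative part rather than the positive part. With $\Phi(I)=(I\cap P,I\cap N)$ the ranks would be $(r-t,t)$, which are incomparable across different $t$, and the argument would fail. A secondary point is the bookkeeping for empty levels, where $q<t$ or $p<r-t$; these contribute nothing on either side.
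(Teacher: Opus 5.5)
Your proof is correct and follows essentially the same route as the paper. The paper also encodes the zero-sum $r$-sets as an antichain in a product order, using reverse inclusion on the negative coordinate, which is exactly your complementation $N\setminus(I\cap N)$. It derives the same LYM-type inequality $\sum_t z_t/\bigl(\binom{p}{r-t}\binom{q}{t}\bigr)\le 1$ from random permutations of $P$ and $N$, which amount to your random pairs of maximal chains. The only differences are cosmetic: you certify the antichain property through the constant total weight $W$ rather than by directly comparing the two subset sums, and you work in all of $2^P\times 2^N$ with levels $L_t$ rather than in the restricted poset of pairs with $|S|+|T|=r$.
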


\begin{proof}
If $p=0$ or $q=0$, then every
$r$-subset of $[n]$ has strictly
positive or strictly negative sum, giving
$
\mathcal{Z}_r(x_1,\dots,x_n)=\emptyset.
$
Hence the claimed bound is trivial in this case.
Therefore we may assume from now on that
$
p\ge 1$ and $q\ge 1$.
Define
\[
\mathcal{X}=\bigl\{(S,T): S\subseteq
P,\ T\subseteq N,\ |S|+|T|=r\bigr\}.
\]
Give $\mathcal{X}$ the partial order
\[
(S,T)\preceq (S',T')\quad\Longleftrightarrow
\quad S\subseteq S' \text{ and } T\supseteq T'.
\]
If $(S,T)\prec (S',T')$, then $S\subseteq S'$ and $T\supseteq T'$ with at least one inclusion strict. Since
\[
|S|+|T|=|S'|+|T'|=r,
\]
this forces both $S'\setminus S$ and $T\setminus T'$ to be nonempty. Hence
\[
\sum_{i\in S'\cup T'}x_i-\sum_{i\in S\cup T}x_i
=\sum_{i\in S'\setminus S}x_i-\sum_{i\in T\setminus T'}x_i>0,
\]
because every term in the first sum is positive and every term in the second sum is negative.
For $I\in \mathcal{Z}_r(x_1,\dots,x_n)$, write
\[
P(I)=I\cap P,\qquad N(I)=I\cap N.
\]
Then $|P(I)|+|N(I)|=r$ and $1\le |N(I)|\le r-1$. Moreover,
\[
I=P(I)\cup N(I),
\]
so the map $I\mapsto (P(I),N(I))$ is injective. Therefore the family
\[
\mathcal{A}=\bigl\{(P(I),N(I)):
I\in \mathcal{Z}_r(x_1,\dots,x_n)\bigr\}
\]
is an antichain in $\mathcal{X}$, and
\[
|\mathcal{A}|=|\mathcal{Z}_r(x_1,\dots,x_n)|.
\]
Indeed, if two distinct members of $\mathcal{A}$ were comparable, the corresponding zero-sum subsets would have different sums by the strict-positivity observation above, which is impossible.

To bound the size of $\mathcal{A}$, we formalize a
probabilistic model. Let $S_P$ and $S_N$ denote
the symmetric groups of all permutations of the
sets $P$ and $N$, respectively.
We define our sample space as the
Cartesian product $\Omega= S_P \times S_N.$
Since $|P|=p$ and $|N|=q$, the size of the
sample space is $|\Omega| = p!q!$.
We equip $\Omega$ with the uniform probability
measure $\mathbb{P}$, such that for every
elementary event $\omega = (\pi, \sigma) \in \Omega$, we have
\[
\mathbb{P}(\{\omega\}) = \frac{1}{p!q!}.
\]
Set $s_-=\max\{0,r-q\}$ and $s_+=\min\{r,p\}$.
For each $\omega = (\pi, \sigma) \in \Omega$
and each integer $s$ with $s_-\le s\le s_+$, define
\[
C_s(\omega)=\bigl(\{\pi_1,\dots,\pi_s\},
\{\sigma_1,\dots,\sigma_{r-s}\}\bigr).
\]
Since $C_s(\omega) \prec C_{s+1}(\omega)$
holds by definition, the set
$\mathcal{C}(\omega)= \{C_s(\omega): s_-\le s\le s_+\}$
forms a chain in $\mathcal{X}$
for every $\omega \in \Omega$.
We define a random variable
$X: \Omega \to \mathbb{R}$ that measures the size
of the intersection between the deterministic
antichain $\mathcal{A}$ and the random chain
$\mathcal{C}(\omega)$:
\[
X(\omega)= |\mathcal{A} \cap \mathcal{C}(\omega)|.
\]
Because a chain and an antichain can intersect
in at most one element, we have
$X(\omega) \le 1$ for all $\omega \in \Omega$.
Taking the mathematical expectation of $X$ over
$\Omega$ gives
\[
\mathbb{E}[X] = \sum_{\omega \in \Omega}
X(\omega)\mathbb{P}(\{\omega\}) \le 1.
\]
For each fixed element $e = (S,T) \in \mathcal{A}$,
define the indicator random variable
$I_e: \Omega \to \{0,1\}$
such that $I_e(\omega) = 1$
if $e \in \mathcal{C}(\omega)$ and
$0$ otherwise. It follows that
$X(\omega) = \sum_{e \in \mathcal{A}} I_e(\omega)$.
By the linearity of expectation,
we obtain
\[
\mathbb{E}[X] = \sum_{e \in \mathcal{A}}
\mathbb{E}[I_e] =
\sum_{e \in \mathcal{A}}
\mathbb{P}\bigl(e \in \mathcal{C}(\omega)\bigr).
\]
We now compute the exact probability
$\mathbb{P}\bigl(e \in \mathcal{C}(\omega)\bigr)$
for a given $e=(S,T) \in \mathcal{A}$
with $|S|=s$ and $|T|=r-s$.
The event $e \in \mathcal{C}(\omega)$
occurs if and only if the first
$s$ entries of $\pi$ exactly form the set $S$,
and the first $r-s$ entries of $\sigma$ exactly
form the set $T$. The number of permutations
$\pi \in S_P$ satisfying the first condition
is $s!(p-s)!$, and the number of
permutations $\sigma \in S_N$ satisfying the second
condition is $(r-s)!(q-r+s)!$.
Thus, the number of outcomes $\omega \in \Omega$
triggering this event is exactly $s!(p-s)!(r-s)!(q-r+s)!$.
Dividing by the sample space size yields
\[
\mathbb{P}\bigl(e \in \mathcal{C}(\omega)\bigr) = \frac{s!(p-s)!(r-s)!(q-r+s)!}{p!q!}
=\frac{1}{\binom{p}{s}\binom{q}{r-s}}
=\frac{1}{\binom{p}{|S|}\binom{q}{|T|}}.
\]
Substituting this exact probability into the
expectation bound gives the inequality:
\begin{equation}
\label{sum}
\sum_{(S,T)\in \mathcal{A}}
\frac{1}{\binom{p}{|S|}\binom{q}{|T|}}\le 1.
\end{equation}
Set
\[
t_0=\max\{1,r-p\},\qquad t_1=\min\{r-1,q\}.
\]
For $t_0\le t\le t_1$, let $z_t$ denote
the number of members
$I\in \mathcal{Z}_r(x_1,\dots,x_n)$ with exactly $t$
negative elements, that is,
$|N(I)|=t$.
Grouping the sum in the left hand side of inequality
(\ref{sum})
by the parameter $t$, we obtain
\[
\sum_{t=t_0}^{t_1}\frac{z_t}{\binom{p}{r-t}\binom{q}{t}}\le 1.
\]
Consequently,
\begin{align*}
|\mathcal{Z}_r(x_1,\dots,x_n)|
&=\sum_{t=t_0}^{t_1} z_t \\
&\le \Bigl(\max_{1\le t\le r-1}\binom{q}{t}\binom{p}{r-t}\Bigr)
\sum_{t=t_0}^{t_1}\frac{z_t}{\binom{p}{r-t}\binom{q}{t}} \\
&\le \max_{1\le t\le r-1}\binom{q}{t}\binom{p}{r-t}.
\end{align*}
This proves the lemma.
\end{proof}

With Lemma \ref{antichain}, it remains to
maximize this combinatorial expression over
all possible values of $q$.
Our final lemma provides this exact algebraic maximum.
\begin{lemma}
\label{ine}
Let $1<r<n$ and let $m=\lfloor n/r\rfloor$. Then
\[
\max_{0\le q\le n}\
\max_{1\le t\le r-1}\binom{q}{t}\binom{n-q}{r-t}
= m\binom{n-m}{r-1}.
\]
\end{lemma}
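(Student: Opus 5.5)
The plan has two halves: the lower bound $\ge$ comes from a single choice of $(q,t)$, and the upper bound $\le$ comes from first optimizing in $q$ for fixed $t$ and then comparing different $t$. Throughout, write
\[
f(q,t)=\binom{q}{t}\binom{n-q}{r-t}.
\]
This is invariant under $(q,t)\mapsto(n-q,r-t)$, so for the upper bound I may assume $t\le r/2$.

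For the lower bound I simply take $q=m$ and $t=1$, which gives $f(m,1)=m\binom{n-m}{r-1}$; this value is admissible because $m\ge 1$.

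Next I handle $t=1$ exactly by a ratio computation. For $g(q)=q\binom{n-q}{r-1}$ one has
\[
\frac{g(q+1)}{g(q)}=\frac{(q+1)(n-q-r+1)}{q(n-q)},
\]
and after clearing denominators this is $\ge 1$ if and only if $r(q+1)\le n+1$. Hence $g$ increases up to $q=\lfloor (n+1)/r\rfloor-1$ and decreases afterwards. When $r\mid n+1$ the two values $g(m)$ and $g(m+1)$ tie; otherwise the maximum is at $q=m$. In either case
\[
\max_q f(q,1)=m\binom{n-m}{r-1}.
\]
By the symmetry above, the same holds for $t=r-1$.

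The main obstacle is the middle range $2\le t\le r-2$, and the goal is to show $f(q,t)\le \max_{q'}f(q',t-1)$ for every $q$. Iterating this down to $t=1$ then finishes the proof. I would first run the same ratio test in $q$ for fixed $t$. This shows that $f(\cdot,t)$ is unimodal, with its peak at $q^\ast$ roughly $t(n+1)/r$ (precisely, the largest $q$ with $(q+1)(n-q-r+t)\ge (q+1-t)(n-q)$). It therefore suffices to treat $q$ near $q^\ast$.

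At that point I would compare $f(q,t)$ with $f(q-d,t-1)$, choosing the shift $d\approx q/t\approx n/r$. The idea is to remove one "block" of the first $q$ positions, so that the density $t/q$ is preserved. The resulting ratio
\[
\frac{f(q-d,t-1)}{f(q,t)}
\]
is a product of $d$ elementary factors. I would try to show that it is at least $1$ using the elementary inequality $\binom{q}{t}\le \frac{q}{t}\binom{q-d}{t-1}\cdot(\text{correction})$ and a matching estimate for the $\binom{n-q}{r-t}$ factor.

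If the direct shift comparison proves too messy, my fallback is the identity
\[
f(q,t)=\binom{n}{r}\binom{r}{t}\binom{n-r}{q-t}\Big/\binom{n}{q}.
\]
This reduces the claim to showing that the hypergeometric mode probability $\max_q \Pr[\mathrm{Hyp}(n,q,r)=t]$ is largest at $t=1$. That can be attacked through the log-concavity of the hypergeometric distribution together with a comparison of the mode values as $t$ varies.
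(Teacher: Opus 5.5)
Your lower bound, the ratio-test treatment of $t=1$ (including the tie when $r\mid n+1$), and the symmetry $f(q,t)=f(n-q,r-t)$ are all correct.

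\textbf{The middle-range step fails.} The shift estimate is left unspecified, since the ``correction'' factor is never defined. More seriously, the inequality you plan to iterate, $f(q,t)\le\max_{q'}f(q',t-1)$ for $2\le t\le r/2$, is false. For $n=8$, $r=6$ one has $\max_q f(q,2)=15$ (attained at $q=2,3$), but $f(4,3)=\binom{4}{3}\binom{4}{3}=16$.

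The failure is systematic. For $n=r+2$ the only nonzero values of $f(\cdot,t)$ occur at $q=t,t+1,t+2$, namely $\binom{r+2-t}{2}$, $(t+1)(r+1-t)$ and $\binom{t+2}{2}$. Hence $M_t:=\max_q f(q,t)=(t+1)(r+1-t)$ for $r/3\le t\le r/2$, which is strictly increasing there. For example, $n=22$, $r=20$ gives $M_7,\dots,M_{10}=112,117,120,121$.

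So $M_t$ is not monotone on $1\le t\le r/2$. No comparison of $f(q,t)$ with $f(q-d,t-1)$ can succeed, and stepping down to $t=1$ is impossible. What is true is $M_t\le M_1$ for each $t$, so the proof must compare each $t$ with $t=1$ directly. Your hypergeometric fallback does not supply this. The claim that the mode probability is largest at $t=1$ is just the lemma restated, and comparing mode values for consecutive $t$ runs into the same non-monotonicity.

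\textbf{The missing idea.} The paper makes the comparison with $t=1$ in one step via Vandermonde's identity.
\begin{itemize}
\item Write the maximizer as $q_t=\lfloor t(n+1)/r\rfloor=tm+a$, and set $B=n-q_t=(r-t)m+b$ with $0\le b\le r-t-1$.
\item Split off one block of size $m$: $\binom{tm+a}{t}=\sum_{k=0}^{t}\binom{m}{k}\binom{u}{t-k}$ with $u=(t-1)m+a$.
\item Elementary ratio comparisons, using $0\le b\le r-t-1$, give $\binom{m}{k}\binom{B}{r-t}\le m\binom{B}{r-t+k-1}$ for every $0\le k\le t$.
\item Resumming yields $f(q_t,t)\le m\sum_{j=0}^{t}\binom{u}{j}\binom{B}{r-1-j}\le m\binom{u+B}{r-1}=m\binom{n-m}{r-1}$.
\end{itemize}
Your ``remove one block'' intuition points in this direction. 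However, the block has to be traded for the factor $m$ and absorbed all at once, not by passing through $t-1$.
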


\begin{proof}
Write
$
n=mr+\ell$
with $0\le \ell<r$.
Fix $t\in\{1,\dots,r-1\}$ and define
\[
F_t(q)=\binom{q}{t}\binom{n-q}{r-t}~\hbox{for~$0\le q\le n$}.
\]
For $t\le q\le n-r+t$,  we have
\[
\frac{F_t(q+1)}{F_t(q)}
=\frac{q+1}{q+1-t}\cdot \frac{n-q-r+t}{n-q}.
\]
A direct simplification shows that
\[
F_t(q+1)\ge F_t(q)
\quad\Longleftrightarrow\quad
rq\le t(n+1)-r.
\]
Set
\[
q_t=\left\lfloor \frac{t(n+1)}{r}\right\rfloor.
\]
Since $n>r$, we have $t(n+1)/r>t$, and
\[
n-r+t+1-\frac{t(n+1)}{r}
=\frac{(r-t)(n-r+1)}{r}>0.
\]
Hence
$
t\le q_t\le n-r+t.
$
Moreover, the ratio criterion implies that
$F_t$ is nondecreasing for $t\le q\le q_t-1$
and nonincreasing for $q_t\le q\le n-r+t$.
Since $\binom{q}{t}=0$ for $q<t$ and $\binom{n-q}{r-t}=0$ for $q>n-r+t$, we also have
\[
F_t(q)=0 \qquad \text{for } q<t \text{ or } q>n-r+t.
\]
Hence $q_t$ is a maximizer of $F_t$
on the whole interval $0\le q\le n$.
Since $n=mr+\ell$ and
$q_t=\lfloor\frac{t(n+1)}{r}\rfloor,$
we have
$
q_t=tm+a,
$
where $a=\lfloor \frac{t(\ell+1)}{r}\rfloor.$
Define
\[
b=\ell-a,\qquad B=(r-t)m+b=n-q_t.
\]
Since $t\le r-1$, we have $t(\ell+1)/r<\ell+1$, so $a\le \ell$ and hence $b\ge 0$.
Also,
\[
b=\ell-a
=\left\lceil \frac{(r-t)(\ell+1)}{r}\right\rceil-1
\le r-t-1.
\]
The equality above follows from
the identity $\lfloor x\rfloor+\lceil c-x\rceil=c$ for every real number $x$ and every integer $c$.
We claim that
\[
F_t(q_t)=\binom{tm+a}{t}\binom{B}{r-t}\le m\binom{n-m}{r-1}.
\]
To this end, let $u=(t-1)m+a.$
Since $tm+a=u+m$, Vandermonde's identity gives
\[
\binom{tm+a}{t}=\binom{u+m}{t}=\sum_{k=0}^{t}\binom{m}{k}\binom{u}{t-k}.
\]
Multiplying by $\binom{B}{r-t}$ yields
\[
\binom{tm+a}{t}\binom{B}{r-t}
=\sum_{k=0}^{t}\binom{m}{k}\binom{u}{t-k}\binom{B}{r-t}.
\]
We bound the summands separately.
First, for $k=0$, the inequality $b\le r-t-1$ implies
\[
\frac{\binom{B}{r-t}}{\binom{B}{r-t-1}}
=\frac{B-r+t+1}{r-t}
=\frac{(r-t)(m-1)+b+1}{r-t}
\le m,
\]
yielding
\[
\binom{u}{t}\binom{B}{r-t}
\le m\binom{u}{t}\binom{B}{r-t-1}.
\]
Next, let $1\le k\le t$. We aim to prove that
\[
\binom{m}{k}\binom{B}{r-t}
\le m\binom{B}{r-t+k-1}.
\]
If $k>m$, then $\binom{m}{k}=0$, so there is nothing to prove. If $k=1$, the claim is immediate.
Hence assume that
$
2\le k\le \min\{t,m\}.
$
Since
\[
\binom{B}{r-t+k-1}
=\binom{B}{r-t}\prod_{j=1}^{k-1}\frac{B-r+t-j+1}{r-t+j},
\]
it is enough to show that for every $1\le j\le k-1$,
\begin{equation}
\label{required}
\frac{B-r+t-j+1}{r-t+j}\ge \frac{m-j}{j+1}.
\end{equation}
For such $j$ we have $j\le k-1\le m-1$,
so the right-hand side is positive. Therefore
multiplying these inequalities for $j=1,\dots,k-1$ yields
\[
\prod_{j=1}^{k-1}\frac{B-r+t-j+1}{r-t+j}
\ge \prod_{j=1}^{k-1}\frac{m-j}{j+1},
\]
and hence
\[
m\prod_{j=1}^{k-1}\frac{B-r+t-j+1}{r-t+j}
\ge m\prod_{j=1}^{k-1}\frac{m-j}{j+1}
=\binom{m}{k}.
\]
Therefore,
it remains to show that the inequality (\ref{required})
holds true for each $1\leq j\leq k-1$.
Using $B=(r-t)m+b$, we compute
\begin{align*}
&(j+1)(B-r+t-j+1)-(m-j)(r-t+j)\\
&=(j+1)\bigl((r-t)(m-1)+b-j+1\bigr)-(m-j)(r-t+j)\\
&=jm(r-t-1)+(j+1)b-(r-t)+1\\
&\ge j(r-t-1)-(r-t)+1\\
&=(j-1)(r-t-1)\ge 0.
\end{align*}
This proves   inequality (\ref{required}) and hence also
\[
\binom{m}{k}\binom{B}{r-t}
\le m\binom{B}{r-t+k-1}
\qquad (1\le k\le t).
\]
Putting the $k=0$ and $k\ge 1$ estimates together, we obtain
\begin{align*}
\binom{tm+a}{t}\binom{B}{r-t}
&\le m\sum_{k=0}^{t}\binom{u}{t-k}\binom{B}{r-t+k-1}\\
&=m\sum_{j=0}^{t}\binom{u}{j}\binom{B}{r-1-j}\\
&\le m\sum_{j=0}^{r-1}\binom{u}{j}\binom{B}{r-1-j}\\
&=m\binom{u+B}{r-1}
= m\binom{(r-1)m+\ell}{r-1}
= m\binom{n-m}{r-1},
\end{align*}
as required.
Thus, for every fixed $t$,
\[
\max_{0\le q\le n} \binom{q}{t}\binom{n-q}{r-t}
=F_t(q_t)\le m\binom{n-m}{r-1}.
\]
Taking the maximum over $t\in\{1,\dots,r-1\}$ proves
\[
\max_{0\le q\le n}\ \max_{1\le t\le r-1}\binom{q}{t}\binom{n-q}{r-t}
\le m\binom{n-m}{r-1}.
\]
The reverse inequality is attained by $q=m$ and $t=1$, because then
\[
\binom{q}{t}\binom{n-q}{r-t}=\binom{m}{1}\binom{n-m}{r-1}=m\binom{n-m}{r-1}.
\]
It follows that the equality holds, and the lemma is proved.
\end{proof}

\medskip
We are now in a position to combine the three preceding
lemmas to prove our main result.
\begin{proof}[Proof of Theorem \ref{theorem}]
Let $A=\{a_1,\dots,a_n\}$ be an
$n$-element set of irrational numbers.
Apply Lemma \ref{linear} to the finite-dimensional
$\Q$-vector space
\[
V=\langle A\cup\{1\}\rangle_{\Q}\subseteq \R.
\]
Choose $\varphi:V\to \R$ with $\ker\varphi=\Q$, and put
\[
x_i=\varphi(a_i)\qquad (1\le i\le n).
\]
Since each $a_i$ is irrational and $\ker\varphi=\Q$, all $x_i$ are nonzero.
Moreover, for every $r$-subset $I\subseteq [n]$ we have
\[
\sum_{i\in I} a_i\in \Q
\quad\Longleftrightarrow\quad
\varphi\!\left(\sum_{i\in I} a_i\right)=0
\quad\Longleftrightarrow\quad
\sum_{i\in I} x_i=0.
\]
Therefore
\[
|\mathcal{H}(A,r)|=|\mathcal{Z}_r(x_1,\dots,x_n)|.
\]
By Lemmas \ref{antichain} and \ref{ine}, we have
\[
|\mathcal{H}(A,r)|
=|\mathcal{Z}_r(x_1,\dots,x_n)|
\le m\binom{n-m}{r-1}.
\]
This proves the upper bound.
For the lower bound, fix an irrational number $\alpha$.
Choose distinct rationals
\[
u_1,\dots,u_m,\qquad v_1,\dots,v_{n-m},
\]
and set
\[
b_i=u_i-(r-1)\alpha\quad (1\le i\le m),
\qquad
c_j=v_j+\alpha\quad (1\le j\le n-m).
\]
Then all $b_i$ and $c_j$ are irrational. They are pairwise distinct: the $b_i$'s are
distinct because the $u_i$'s are distinct, the $c_j$'s are distinct because the $v_j$'s are
distinct, and $b_i\ne c_j$ for all $i,j$, since
\[
u_i-(r-1)\alpha=v_j+\alpha
\]
would imply $u_i-v_j=r\alpha$, impossible because the left-hand side is rational and the
right-hand side is irrational. Therefore
\[
A=\{b_1,\dots,b_m,c_1,\dots,c_{n-m}\}
\]
is exactly a set produced by \cite[Construction 5.1]{MN}.
Consider an $r$-subset of $A$ containing exactly $j$ elements among $b_1,\dots,b_m$.
The coefficient of $\alpha$ in the sum of its elements is
\[
j\bigl(-(r-1)\bigr)+(r-j)\cdot 1 = r(1-j).
\]
Hence the sum is rational if and only if $j=1$. Therefore the rational-sum $r$-subsets of $A$ are exactly those formed by choosing one $b_i$ and $r-1$ of the $c_j$'s, and their number is
\[
m\binom{n-m}{r-1}.
\]
Thus the upper bound is sharp.
\end{proof}

We end this paper by presenting a proof  for Corollary
\ref{cor:problem12}.

\begin{proof}[Proof of Corollary \ref{cor:problem12}]
Let $\mathbf{x}=(x_1,\dots,x_n)\in (\Q\setminus\{0\})^n$.
By Lemmas \ref{antichain} and \ref{ine},
\[
|Z_{\mathbf{x}}(r)|\le m\binom{n-m}{r-1}.
\]
Hence
\[
z_{\mathbb{Q}}(n,r)\le m\binom{n-m}{r-1}.
\]
Since every integer sequence is also a rational sequence,
we have
$
z_{\mathbb{Z}}(n,r)\le z_{\mathbb{Q}}(n,r).
$
On the other hand, consider the integer sequence
\[
\mathbf{y}=(\underbrace{-(r-1),\dots,-(r-1)}_{m\text{ times}},
\underbrace{1,\dots,1}_{n-m\text{ times}}).
\]
If an $r$-subset of indices contains exactly $j$ negative terms, then its sum is
\[
j\bigl(-(r-1)\bigr)+(r-j)\cdot 1 = r(1-j).
\]
Therefore the sum is zero if and only if $j=1$, and thus
\[
|Z_{\mathbf{y}}(r)|=m\binom{n-m}{r-1}.
\]
Consequently,
\[
m\binom{n-m}{r-1}
\le z_{\mathbb{Z}}(n,r)
\le z_{\mathbb{Q}}(n,r)
\le m\binom{n-m}{r-1},
\]
which proves that
\[
z_{\mathbb{Q}}(n,r)=z_{\mathbb{Z}}(n,r)=m\binom{n-m}{r-1}.
\]
\end{proof}

\end{document}